\documentclass[11pt,a4paper]{article}
\usepackage{geometry}
\usepackage{amssymb}
\usepackage{amsmath}
\usepackage{amsthm}
\usepackage{subcaption}
\usepackage{tikz}

\theoremstyle{plain}
\newtheorem{theorem}{Theorem}
\newtheorem{lemma}[theorem]{Lemma}
\newtheorem{corollary}[theorem]{Corollary}

\theoremstyle{definition}

\newtheorem{example}[theorem]{Example}
\usepackage{authblk}

\title{Undirected edge geography games on grids}

\author[1]{Tharit Sereekiatdilok}

\author[1,2]{Panupong Vichitkunakorn\thanks{panupong.v@psu.ac.th}}

\affil[1]{
    Division of Computational Science, Faculty of Science, Prince of Songkla University, Songkhla, Thailand}

\affil[2]{
    Research Center in Mathematics and Statistics with Applications, Prince of Songkla University, Songkhla, Thailand}

\date{}
\begin{document}
\maketitle

\begin{abstract}

The undirected edge geography is a two-player combinatorial game on an undirected rooted graph.
The players alternatively perform a move consisting of choosing an edge incident to the root vertex, removing the chosen edge, and marking the other endpoint as a new root vertex. 
The first player who cannot perform a move is the loser.
In this paper, we are interested in the undirected edge geography game on the grid graph $P_m\square P_n$.
We completely determine all N-positions (the root vertices where the first player wins) and all P-positions (the root vertices where the first player loses). Moreover, we give a winning strategy for the winner.
\end{abstract}

\section{Introduction} \label{thm:fsu93}

In 1993, Aviezri S. Fraenkel and Shai Simonson introduced the geography game on a directed graph \cite{FRAENKEL1993197}. 
It is a two-player combinatorial game on a directed graph $G$ that starts with a token on the vertex $v$.
The players alternatively move the token to another vertex along an arrow (a directed edge) and then remove the arrow.
The first player who has no legal move is the loser.
Later in the year, the game is extended to an undirected graph and is called the \emph{Undirected Edge Geography (UEG)} \cite{Fraenkel1993}.

Let $G_{m\times n} = P_m \square P_n$ be a grid graph with $m$ columns and $n$ rows.
Let $[n] = \{1,2,\dots,n\}$. Throughout the paper, we embed $G_{m\times n}$ inside a rectangle $[0,m+1]\times[0,n+1]$ such that $V(G_{m\times n}) = [m]\times [n]$ and two vertices $(i,j)$ and $(i',j')$ are adjacent if and only if $|i-i'|+|j-j'|=1$.

We are interested in the (rooted) Undirected Edge Geography (UEG) on $G_{m\times n}$. A position (or a state) of the game can be written in the form $(H,v)$ where $H$ is a subgraph of $G_{m\times n}$ and $v\in V(G_{m\times n})$ is called the \emph{root} (the vertex the game is currently playing on).
A position is an \emph{N-position} if the first player wins, and it is a \emph{P-position} if the first player loses.
\begin{theorem}[\cite{Fraenkel1993}]
    Let $m,n\geq 2$ and $v=(1,1)$. Then $(G_{m\times n}, v)$ is a P-position if and only if $\gcd(m+1, n+1) \neq 1$.
\end{theorem}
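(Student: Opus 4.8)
The plan is to pin down the game value by producing an explicit winning strategy in each case; I would not try to route through a general position criterion for undirected edge geography, since the simplest candidate already fails here — $G_{2\times 2}=C_4$ carries a perfect matching yet $(C_4,(1,1))$ is a P-position (after the opening move every reply is forced, the play has length $4$, and the first player is left with no move), consistently with $\gcd(3,3)=3\ne 1$. I will use throughout the recursion that a position $(H,r)$ is an N-position iff some edge $e$ at $r$ yields a P-position $(H-e,u)$ with $u$ the other endpoint of $e$; equivalently a play is a trail in $H$ starting at $r$, and a player loses exactly when the current root has become isolated. Set $d=\gcd(m+1,n+1)$ and write $m+1=da$, $n+1=db$ with $\gcd(a,b)=1$.

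For the P-position direction ($d\ge 2$), I would give the second player a pairing (mirroring) strategy. The goal is to fix, before play, a family of edge-disjoint length-$2$ paths (``$L$-pieces'') covering the edges of $G_{m\times n}$, together with the involution $\pi$ swapping the two edges of each piece, arranged so that $\pi(e)$ is always incident to the far endpoint of $e$ — the vertex the token occupies right after traversing $e$ — and so that the corner $(1,1)$ is never the middle vertex of a piece. The divisibility $d\mid m+1$ and $d\mid n+1$ is exactly what lets such a family fit the enclosing rectangle $[0,m+1]\times[0,n+1]$: tile the rectangle into an $a\times b$ array of $d\times d$ blocks, lay a ``staircase'' of $L$-pieces inside each block, and check that the staircases of neighbouring blocks stitch together consistently along all shared sides (this is where $d\ge 2$ and the common factor are used). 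The strategy is: to a move along $e$, reply with the move along $\pi(e)$. One then verifies that (i) the reply is always legal — the crux being that after each reply the token never sits at the middle vertex of a still-present piece — and (ii) the play ends after a move-count equal to twice the number of $L$-pieces actually traversed, which is even, so the second player makes the last move.

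For the N-position direction ($d=1$), I would have the first player open from the corner along the vertical edge $(1,1)(1,2)$, and then play as the responder with the analogous strategy. Deleting that edge makes $(1,1)$ a pendant and leaves a graph $H$ rooted at $(1,2)$; now $\gcd(a,b)=\gcd(m+1,n+1)=1$, and the point is that for $H$ one can build an $L$-piece family and involution $\pi$ as above, with $(1,2)$ barred from being a middle vertex and — crucially — with the number of $L$-pieces of the parity that makes the first player (responding from move $2$ onward) take the last move. The first player then mirrors via $\pi$ thereafter.

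The main obstacle, in either direction, is the explicit construction of this $L$-piece/staircase structure and the proof that the mirrored reply is always available: one must handle the three kinds of grid-boundary vertices (corners of degree $2$, side vertices of degree $3$, interior vertices of degree $4$) and the interfaces between adjacent $d\times d$ blocks, and identify precisely which edges, if any, are left uncovered when twice the number of $L$-pieces does not equal $|E(G_{m\times n})|=2mn-m-n$ — this already happens for the ladder $m=2$, where the edge count can be odd — arranging that those leftover edges are never reachable at a moment that would break the mirror. An alternative that sidesteps some of this is an induction that peels a frame of thickness $d$ off the grid (for $m=2$, three consecutive rungs at a time), reducing $(m,n)$ while keeping $\gcd(m+1,n+1)$ fixed; then the technical burden shifts to carrying an inductive statement general enough to cover the non-grid positions that arise mid-play.
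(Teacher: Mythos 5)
Your pairing idea is the right instinct, but the proposal stops exactly where the proof begins: the family of $L$-pieces is never constructed, and the two properties you yourself flag as ``the main obstacle'' --- that the mirrored reply is always legal and that the uncovered edges are never reachable --- are never verified. The clean way to package your strategy is as an \emph{even kernel}: an independent set $S$ containing the root such that every vertex $u\notin S$ has an even number of neighbours in $S$. Pairing the $S$--to--$(V\setminus S)$ edges at each $u\notin S$ gives precisely your involution $\pi$, the middle vertices of the pieces are the vertices outside $S$ (so the root is never a middle vertex), and the ``leftover'' edges (those with no endpoint in $S$) are automatically unreachable because the responder always returns the token to $S$ --- so that worry dissolves. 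Likewise your parity condition on the number of pieces in the N-direction is superfluous: a responder with a valid pairing always has a reply, hence always makes the last move. What remains, and what the cited proof actually supplies, is the explicit $S$: it is obtained from a diagonal billiard ray through $v$ reflecting off the walls of $[0,m+1]\times[0,n+1]$, and the hypothesis $\gcd(m+1,n+1)\neq 1$ is exactly what forces that ray through the corner $(1,1)$ to close up into a $180$--degree trail rather than turn; the resulting set has the staircase pattern with period $d$ that you are gesturing at (compare the sets $S_k=\{(i,j): d\nmid i,\ d\nmid j,\ i\pm j\equiv\pm k \pmod{2d}\}$), and the even-kernel property is a short local case check. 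Without that construction and check, both directions of your argument are unproved.

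There is also a concrete error in the N-direction: the opening move $(1,1)\to(1,2)$ is not always winning. In $G_{2\times 3}$ the unique winning first move from the corner is the vertical one, to $(1,2)$; transposing, in $G_{3\times 2}$ the unique winning first move is to $(2,1)$, and $(1,1)\to(1,2)$ \emph{loses} (one can check this by hand on the seven edges). So the correct first move depends on the geometry of $m$ and $n$ and cannot be fixed in advance as ``the vertical edge''; in the paper's framework it is determined by which region cut out by the $90$--degree billiard trail at the corner is positive. Any completion of your argument must identify the right neighbour before handing the responder role to the first player.
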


We aim to generalize the result in Theorem~\ref{thm:fsu93} for a general vertex $v$ as in the following theorem. 
\begin{theorem} \label{thm:main}
    Let $d = \gcd(m+1, n+1)$ and $v=(a,b)$. Then $(G_{m\times n}, v)$ is a P-position if and only if $d \nmid a$ and $d \nmid b$.
\end{theorem}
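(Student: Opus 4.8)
The plan is to prove both implications by producing explicit winning strategies, which suffices because undirected edge geography on a finite graph is a finite game with no draws, so $(G_{m\times n},v)$ is a P-position exactly when the second player has a winning strategy. I would start from two bookkeeping facts. First, $G_{m\times n}$ is bipartite, with colour classes $\{(i,j):i+j\text{ even}\}$ and $\{(i,j):i+j\text{ odd}\}$; hence along any play the root alternates between the two classes, so whenever it is the first player's turn the root lies in the class of the starting vertex, and the loser is whoever is first stuck at a vertex of that class. Second, the dividing lines $x=d,2d,\dots,m+1-d$ and $y=d,2d,\dots,n+1-d$ exist precisely because $d\mid m+1$ and $d\mid n+1$, and they cut the ambient rectangle into $d\times d$ blocks; a vertex $(a,b)$ is of \emph{interior type} ($d\nmid a$ and $d\nmid b$) exactly when it lies strictly inside one block, and of \emph{line type} otherwise. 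What must be shown is that interior-type roots are P-positions and line-type roots are N-positions; Theorem~\ref{thm:fsu93} is the corner case $v=(1,1)$, and the case $d=1$ --- where the claim is that every root is an N-position --- must also be covered.

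For the implication ``$(a,b)$ interior type $\Rightarrow$ P-position'' the core is a pairing strategy for the second player. I would fix a domino/matching pattern $M$ on $G_{m\times n}$ that is invariant under the translations $(d,0)$ and $(0,d)$ and is adapted to the block containing $v$ (so that near $v$ the dominoes point away from $v$ and across the dividing lines they mesh consistently), and let the second player answer each of the first player's moves by following $M$. Since the positions occurring during play are not grids but grids with some edges deleted, I would make this precise by introducing a class $\mathcal{C}$ of positions --- such partial grids, with root of interior type and with the set of already-deleted edges constrained by $M$ and by the trail so far --- and proving that (i) $(G_{m\times n},(a,b))\in\mathcal{C}$ and (ii) for every $(H,w)\in\mathcal{C}$ and every legal move $(H,w)\to(H-e,w')$ there is a legal reply $(H-e,w')\to(H-e-f,w'')\in\mathcal{C}$. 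Closure of $\mathcal{C}$ then shows the second player can always move, so the first player is the one who gets stuck. The verification of (ii) I would split according to whether the first player's move stays inside the current block or crosses a dividing line, and I expect a further split on the parity of $d$: note $mn$ is odd exactly when $d$ is even, so whether $G_{m\times n}$ itself admits a perfect matching depends on the parity of $d$, and this will presumably make $M$ and $\mathcal{C}$ look different in the two cases. An alternative I would also pursue is to fold the grid along the dividing lines, which should identify the game from an interior-type root with the game on the square grid $P_{d-1}\square P_{d-1}$ from the image root, reducing this whole implication to the square case, where $M$ is most symmetric and a reflection-type strategy may be accessible.

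For the implication ``$d\mid a$ or $d\mid b$ $\Rightarrow$ N-position'', suppose $d\mid a$. If $d\nmid b$, the first player opens with $(a,b)\to(a-1,b)$ onto the interior-type vertex $(a-1,b)$ (legal because $d\ge 2$ gives $1\le a-1\le m$); the resulting position differs from a genuine interior-type start only by one missing edge at the line-type vertex $(a,b)$, and with $M$ chosen to avoid edges at line-type vertices this position still lies in $\mathcal{C}$, so the first player now runs the second player's strategy and wins. The delicate subcase is $d\mid a$ and $d\mid b$ together, where every neighbour of $v$ is again line type so that no interior-type vertex is reachable in one move; there I would instead exhibit an opening move onto a line-type vertex that still lands in $\mathcal{C}$, reading off the local picture of $M$ at a crossing point of two dividing lines. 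The case $d=1$ has no interior-type vertex to aim at; there I would use that $\gcd(m+1,n+1)=1$ forces $mn$ even, hence a perfect matching of $G_{m\times n}$, and construct the first player's opening move and responses directly from it.

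The step I expect to be the main obstacle is constructing the pattern $M$ and the invariant class $\mathcal{C}$ and checking closure against every move of the first player, especially near the block boundaries, where the local graph is not the grid interior and where the trail may cross a dividing line, re-cross it, and run along it. Making $\mathcal{C}$ tight enough to be closed yet wide enough to contain the starting position is the crux, and the parity of $d$ seems to force at least a two-case treatment; if one instead routes through the folding reduction, then the self-contained analysis of $P_{d-1}\square P_{d-1}$, all of whose vertices must be proven to be P-positions, becomes the bottleneck.
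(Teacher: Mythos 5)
Your overall architecture (a second-player strategy when $d\nmid a$ and $d\nmid b$, an opening move into such a position otherwise, separate treatment of $d=1$ and of the crossing points with $d\mid a$ and $d\mid b$) matches the shape of the paper's argument, but the engine you propose --- a translation-invariant domino pattern $M$ with the second player answering along matched edges, plus a closed class $\mathcal{C}$ of positions --- is the wrong tool for \emph{edge} geography, and this is a genuine gap rather than a deferrable technicality. A pairing strategy along a matching is sound for vertex geography, where a visited vertex is deleted and can never be re-entered; in edge geography the token may return to a vertex $w$ after the matched edge at $w$ has already been consumed, at which point the prescribed reply is illegal, and no choice of $M$ prevents the opponent from steering the token back. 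The correct replacement, which your proposal never reaches, is the \emph{even kernel} of Theorem~\ref{EvenKernel}: an independent set $S\ni v$ such that every vertex outside $S$ has an even number of neighbours in $S$. The second player's strategy is then ``always move back into $S$''; what guarantees a legal reply is a parity count of the unused edges joining the current vertex to $S$ (in effect a pairing of edges at each vertex, not a matching of vertices). All of the substance of the theorem lies in constructing such sets, and your blocks-and-dividing-lines picture does not construct them.

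The paper's construction is a diagonal billiard argument absent from your plan: shoot slope-$\pm1$ rays from the root and reflect them off the boundary of $[0,m+1]\times[0,n+1]$. When $d\nmid a$ and $d\nmid b$ the rays form a 180--degree trail whose vertices lying on exactly one segment are an even kernel containing $v$ (Lemmas~\ref{lem:P1_180_trail_exist} and~\ref{lem:P2_ek_180_trail}); when $d\mid a$ or $d\mid b$ there is a closed 90--degree trail at $u$, and a two-colouring of the regions it bounds yields an even kernel of $G-uv$ containing a neighbour $v$ of $u$ (Lemmas~\ref{lem:N1_closed_90_exist}--\ref{lem:N3_ek_90_trail}). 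The existence proofs are exactly where $\gcd(m+1,n+1)$ enters, via writing $a$ as an integer combination of $m+1$ and $n+1$ and unfolding the reflections; your proposal contains no substitute for this step. Note also that the paper's N-position argument is self-contained and does not reduce to the P-position case, which disposes of the subcase $d\mid a$ and $d\mid b$ you rightly flag as delicate: there every neighbour is again of line type, so your reduction to an interior-type root is unavailable, whereas the 90--degree-trail kernel applies uniformly. Finally, the folding reduction to $P_{d-1}\square P_{d-1}$ is not a legitimate game equivalence (edge deletions do not commute with identifying folded edges), although the periodicity it suggests is real and surfaces in the paper as the explicit kernels $S_k$.
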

The proof comes directly from Theorems~\ref{thm:N-pos} and \ref{thm:P-pos}. We also note that Theorem~\ref{thm:fsu93} is a special case of Theorem~\ref{thm:main} when $v$ is a corner of $G_{m\times n}$.

An \emph{even kernel} for a simple graph $G$ is a nonempty vertex set $S$ such that
\begin{enumerate}
    \item $S$ is independent and
    \item for any vertex $u\notin S$, we have $|N(u)\cap S|$ is even.
\end{enumerate}
An even kernel is used in \cite{Fraenkel1993} as a main tool to determine a P-position.

\begin{theorem}[\cite{Fraenkel1993}]\label{EvenKernel}
    If $S$ is an even kernel for a simple graph $G$ and $v\in S$, then $(G,v)$ is a P-position.
    Moreover, if $G$ is a bipartite graph then $(G,v)$ is a P-position if and only if $v$ is in an even kernel for a graph $G$.
\end{theorem}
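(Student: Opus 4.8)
The plan is to prove the first (general) assertion by an explicit pairing strategy, and then to reduce the bipartite equivalence to a single linear-algebraic dichotomy over $\mathbb{F}_2$, after which a mirror-image strategy settles the remaining direction.

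\textbf{Sufficiency (any simple graph).} For the first assertion I would let $S$ be an even kernel with $v\in S$ and hand the second player (the responder, since $v\in S$) a winning strategy governed by the invariant that after each of his moves the root lies in $S$ and every $u\notin S$ has an even number of remaining edges to $S$. This holds initially because $|N(u)\cap S|$ is even for every $u\notin S$. Whenever the first player moves he must, by independence of $S$, pass from a root in $S$ to some $u\notin S$, deleting one edge from $u$ to $S$ and making the residual $S$-degree of $u$ odd, hence positive; the second player answers by moving back into $S$ along another such edge, deleting a second one and restoring even parity at $u$. Each completed excursion out of $S$ therefore removes exactly two of $u$'s edges to $S$, so the invariant persists, a reply is always available, and—the edge set being finite—the first player is the one who eventually cannot move. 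Thus $(G,v)$ is a P-position, using only the two defining properties of an even kernel.

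\textbf{Bipartite equivalence.} The ``if'' direction is exactly what was just proved, so only the converse needs bipartiteness, which I would treat by contrapositive. Write $G$ with parts $X\sqcup Y$, assume $v\in X$, and let $B\in\mathbb{F}_2^{X\times Y}$ be the biadjacency matrix. First I would record that an even kernel is precisely a nonempty independent set $S$ with $A\mathbf{1}_S=\mathbf{0}$ over $\mathbb{F}_2$ (independence kills the coordinates inside $S$, the kernel condition the ones outside), and that if $v$ lies in any even kernel $S$ then $S\cap X$ is again an even kernel containing $v$: the parity of $|N(y)\cap(S\cap X)|$ is even for every $y\in Y$, and subsets of $X$ are automatically independent. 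Hence $v$ lies in an even kernel if and only if some $s\in\ker B^{\mathsf{T}}$ has $s_v=1$. The crux is then the $\mathbb{F}_2$ theorem of the alternative: because $\mathrm{col}(B)=\mathrm{row}(B^{\mathsf{T}})=(\ker B^{\mathsf{T}})^{\perp}$, exactly one of two things happens—either some $s\in\ker B^{\mathsf{T}}$ has $s_v=1$, or $e_v\in\mathrm{col}(B)$, i.e. there is a set $T\subseteq Y$ with $B\mathbf{1}_T=e_v$. Unpacked, such a $T$ satisfies $|N(v)\cap T|$ odd and $|N(x)\cap T|$ even for every other $x\in X$; that is, $T$ is an even kernel except for a single parity defect at $v$.

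\textbf{The N-position in the defective case.} To finish the converse I would show that when this second alternative holds the first player wins, so $v\notin$ any even kernel forces $(G,v)$ to be an N-position. The first player opens by stepping from $v$ into $T$ (possible since $|N(v)\cap T|$ is odd, hence positive), which cancels the lone defect, and thereafter plays the exact mirror of the sufficiency strategy with $T$ in the role of the kernel: he maintains that after each of his moves the root is in $T$ and every vertex of $X$ has even residual degree into $T$. By bipartiteness each second-player move necessarily runs from $T$ into $X$, creating an odd, hence positive, residual $X$–$T$ degree along which the first player steps back into $T$; as before a reply is always available, so the second player is the one who runs out of moves. The two strategies are literally the same parity-pairing argument with the parts interchanged.

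\textbf{Main obstacle.} The only non-formal ingredient is the dichotomy of the third paragraph; everything else is one idea used twice. I expect the real work to be in justifying the localization of even kernels to a single part and in verifying that the defect set $T$ plays for the first player precisely the role that a genuine even kernel plays for the second. Bipartiteness enters in exactly two ways—forcing each player's moves to alternate across $X$ and $Y$, and splitting the condition $A\mathbf{1}_S=\mathbf{0}$ into the independent systems $B\mathbf{1}_{S_Y}=\mathbf{0}$ and $B^{\mathsf{T}}\mathbf{1}_{S_X}=\mathbf{0}$—and it is precisely this splitting that makes the linear-algebraic alternative available; the absence of any such splitting for general graphs is consistent with the first assertion being only a sufficient condition there.
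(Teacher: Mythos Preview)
The paper does not prove this theorem: it is quoted verbatim from \cite{Fraenkel1993} and used as a black box, so there is no ``paper's own proof'' to compare against. Your task was therefore to supply a proof where the authors chose to cite one, and on that count your argument is correct.

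The sufficiency half is the standard pairing strategy and is cleanly executed; the invariant you state is exactly the right one, and independence of $S$ is used precisely where it must be (to force every exit from $S$ and to make the coordinates of $A\mathbf{1}_S$ inside $S$ vanish automatically).

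For the bipartite converse, your reduction is sound. The localization step---replacing an even kernel $S$ containing $v\in X$ by $S\cap X$---works because independence makes $N(y)\cap S\subseteq X$ for $y\in S\cap Y$ as well as for $y\notin S$, so the parity condition survives the restriction. The $\mathbb{F}_2$ alternative $(\ker B^{\mathsf T})^{\perp}=\mathrm{col}(B)$ is the genuine content, and your reading of $B\mathbf{1}_T=e_v$ as ``an even kernel with a single parity defect at $v$'' is exactly right: after the first player's opening move into $T$ deletes one $v$--$T$ edge, the set $T$ becomes an honest even kernel of the residual graph containing the new root, and the sufficiency argument applies with the players' roles swapped. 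This is in fact the line of argument in the original Fraenkel--Scheinerman--Ullman paper, so your proposal recovers the source proof rather than diverging from it.

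One small remark: in the sufficiency paragraph you write that each excursion removes two of $u$'s edges to $S$; strictly speaking the invariant is global (even $S$-degree at \emph{every} $u\notin S$), not local to a single $u$, since successive excursions need not revisit the same $u$. Your invariant statement already says this correctly, so the informal sentence is harmless.
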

    
\section{Main results}

Let $v$ be a vertex of $G_{m\times n}$. We consider a \emph{ray} with slope $+1$ or $-1$ going out from $v$. There are four directions the ray can go, which can be denoted as NE, NW, SE and SW for the directions $(+1,+1)$, $(-1,+1)$, $(+1,-1)$ and $(-1,-1)$, respectively.
The ray continues until it hits the sides of the rectangle $[0, m+1]\times [0,n+1]$ on which it reflects in the usual manner. We assume that the ray will stop if it hits one of the four corners of the rectangle or it returns to $v$. In addition, we can also consider two rays going out from $v$ in different directions. As a result, we obtain a \emph{trail}, which is a collection of straight line segments among $v$ and vertices on the boundaries of the rectangle. Two examples of a trail at the vertex $v=(2,2)$ of $G_{5,3}$ are shown in Figure~\ref{fig:1}. Note that the first trail is obtained from a single ray going out from $v$ in the direction $(+1,+1)$ (or in the direction $(+1,-1)$), while the second trail is obtained from two rays going out from $v$ in the directions $(-1,+1)$ and $(-1,-1)$.

\begin{figure}[!ht]
    \centering
    \includegraphics[width=0.25\textwidth]{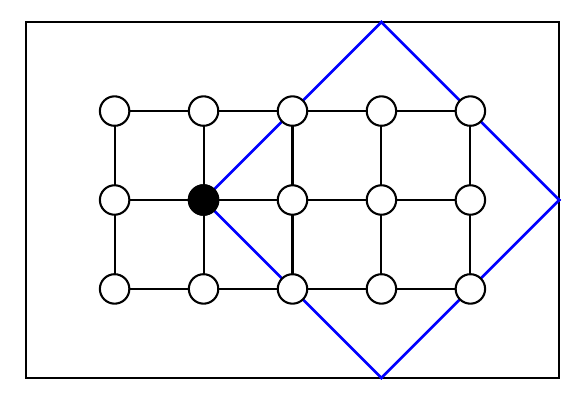}
    \hspace{0.1\textwidth}
    \includegraphics[width=0.25\textwidth]{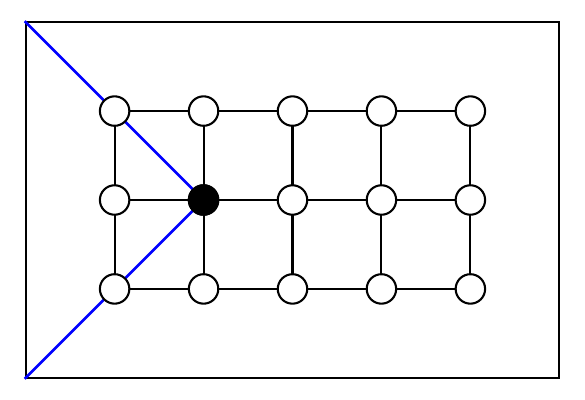}
    \caption{The vertex $(2,2)$ of $G_{5,3}$ has exactly two trails that there are two straight line segments touching $(2,2)$.}
    \label{fig:1}
\end{figure}

In this paper, we are only interested in a trail in which there are exactly two straight line segments touching $v$. We call it a \emph{90--degree trail} or a \emph{180--degree trail} depending on the angle between the two straight line segments at $v$. A trail is called \emph{open} if both ends go to corners. Otherwise, the trail is \emph{closed}. For the example in Figure~\ref{fig:1}, the vertex $(2,2)$ has exactly two 90--degree trials. The first 90--degree trail is closed while the second is open. The vertex $(2,2)$ has no 180--degree trail.

\subsection{N-position and 90--degree trail}

A 90--degree trail at $v$ separates the rectangle $[0,m+1] \times [0,n+1]$ into disjoint (polygonal) regions.
A region is \emph{unbounded} if at least one of its sides (boundary line segments) is a part of a side of the rectangle.
We give a label \emph{positive} or \emph{negative} to each region. A labeling is \emph{proper} if any two regions that share a side have different labels.

To prove Theorem~\ref{thm:N-pos}, we need Lemmas~\ref{lem:N1_closed_90_exist}, \ref{lem:N2_proper_label} and \ref{lem:N3_ek_90_trail}.

\begin{lemma}\label{lem:N1_closed_90_exist}
    Let $G= G_{m\times n}$ where $d = \gcd(m+1,n+1)$ and $u=(a,b)$ be a vertex of $G$.
    If $d \mid a$ or $d \mid b$, then there is a closed 90--degree trail at $u$. Furthermore, it can be obtained from a ray going from $u$ in the direction $(+1,+1)$ or $(-1,-1)$.
\end{lemma}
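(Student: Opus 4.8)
The plan is to pass to the billiard ``unfolding'' of the ray. Write $W=m+1$ and $H=n+1$, so $d=\gcd(W,H)$, and tile $\mathbb R^2$ by reflected copies of the rectangle $[0,W]\times[0,H]$, the folding map carrying each copy isometrically onto the original. A ray from $u=(a,b)$ with slope $+1$ then unfolds to the straight line $\ell\colon y=x+(b-a)$, which I parametrise by $x=a+t$; the direction $(+1,+1)$ corresponds to increasing $t$ and $(-1,-1)$ to decreasing $t$, and reflections off the sides become crossings of the grid lines $x\in W\mathbb Z$ and $y\in H\mathbb Z$. The ray stops at the first $t$ (in the relevant direction) at which $\ell$ meets either a \emph{lift of $u$}, which has the form $(2jW+\varepsilon a,\,2kH+\delta b)$ with $\varepsilon,\delta\in\{\pm1\}$ recording the parities of the numbers of horizontal and vertical reflections, or a \emph{corner}, i.e.\ a lattice point of $W\mathbb Z\times H\mathbb Z$. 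The two straight segments of the resulting trail at $u$ make a $90$--degree angle precisely when $\varepsilon\delta=-1$, and are collinear (a smooth pass--through or an exact retrace) when $\varepsilon\delta=+1$. So it suffices to show that, for at least one of the two directions, the first stopping value of $t$ is a lift of $u$ of \emph{mixed} type $\varepsilon\delta=-1$.

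Substituting $\ell$ into these conditions is elementary modular arithmetic: a mixed return with $(\varepsilon,\delta)=(+1,-1)$ exists iff $jW\equiv-b\pmod H$ is solvable, i.e.\ iff $d\mid b$; one with $(\varepsilon,\delta)=(-1,+1)$ exists iff $d\mid a$; a retrace $(\varepsilon,\delta)=(-1,-1)$ exists iff $d\mid a-b$; and $\ell$ carries a corner iff $d\mid a-b$. Moreover the orbit of $\ell$ is periodic in $t$ of period $L=2WH/d$; within one period each return type that exists occurs exactly once, except the pass--through type $(+1,+1)$, which occurs at $t=L$ (the orbit closing up) and hence is never the first stopping event, and the corner, which occurs exactly twice (the two corner parameters are spaced $L/2$ apart) when $d\mid a-b$.

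\emph{Case 1: $d$ divides exactly one of $a,b$.} Say $d\mid b$ and $d\nmid a$ (the other sub--case is symmetric, or follows by transposing the grid, which preserves the direction $(+1,+1)$). Then $d\nmid a-b$, so $\ell$ has no corner and no retrace, and its only returns are the pass--through at $t=L$ and the mixed return of type $(+1,-1)$ at some $t_C$ with $0<t_C<L$. Thus the ray from $u$ in direction $(+1,+1)$ meets no corner and first returns to $u$ at $t_C$ with a $90$--degree turn, giving a closed $90$--degree trail.

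\emph{Case 2: $d\mid a$ and $d\mid b$ (hence $d\mid a-b$).} Let $T_1$ be the parameter of the first corner met going in direction $(+1,+1)$; since $a+\tfrac L2\not\equiv0\pmod W$ one gets $T_1\in(0,\tfrac L2)$, and the second corner sits at $T_2=T_1+\tfrac L2$. The reflection of $\mathbb R^2$ about the corner lattice point at parameter $T_1$ is a deck transformation of the tiling, so it descends to a direction--reversing symmetry $t\mapsto 2T_1-t$ of the orbit; it sends $(\varepsilon,\delta)\mapsto(-\varepsilon,-\delta)$, hence interchanges the two mixed returns (say at parameters $t_C,t_D\in(0,L)$) and sends $u$ (at $t=0$) to the retrace. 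Therefore the retrace sits at $t_B=2T_1$, so $T_1<t_B<T_2$: in either direction a corner is reached strictly before the retrace. Also $t_C+t_D\equiv 2T_1\pmod L$. Now if both $t_C,t_D$ lay in the open arc $(T_1,T_2)$, writing $t_C=T_1+s_C$, $t_D=T_1+s_D$ with $s_C,s_D\in(0,\tfrac L2)$ would give $t_C+t_D=2T_1+(s_C+s_D)$ with $s_C+s_D\in(0,L)$, contradicting $t_C+t_D\equiv2T_1\pmod L$; and neither $t_C$ nor $t_D$ is a corner parameter. Hence at least one of $t_C,t_D$ lies in $(0,T_1)$ or in $(T_2,L)$. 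In the first case the ray in direction $(+1,+1)$ first stops there: it cannot have hit a corner (the first corner is at $T_1$) nor a non--mixed return (the retrace is at $t_B>T_1$ and the pass--through at $L>T_1$), so it stops at a mixed return, a closed $90$--degree trail; in the second case the ray in direction $(-1,-1)$ does the same by the mirror argument.

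The step I expect to be the crux is Case 2: one must know that the \emph{only} way a slope--$\pm1$ orbit through $u$ can fail to ``close up at a $90$--degree turn'' is by running into a corner, not a retrace (this is exactly the inequality $T_1<t_B$ extracted from the corner--reflection symmetry), and then exclude the single bad configuration in which both $90$--degree returns are buried in the arc between the two corners --- which is precisely what the identity $t_C+t_D\equiv2T_1\pmod L$ forbids. The remaining ingredients (the parity bookkeeping for $(\varepsilon,\delta)$, the four divisibility criteria, the period $L$, and the spacing of the corners) are routine.
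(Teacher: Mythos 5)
Your proof is correct, and while it rests on the same unfolding device as the paper (straightening the billiard ray into a line of slope $1$ in a tiling of the plane by reflected copies of the rectangle), the decisive step is handled quite differently. The paper extracts from the unfolding only the qualitative fact that the non-stopping NE ray must eventually return from the SE (via a B\'ezout representation $a = h(m+1) - k(n+1)$), and then rules out every bad first-return scenario by a case analysis on return directions, each branch ending in a contradiction with that eventual SE return. You instead make the unfolding quantitative: you enumerate all events on the line (the four lift types and the corners), decide which exist via the divisibility criteria, locate each exactly once per period $L = 2WH/d$, and in the only delicate case ($d \mid a$ and $d \mid b$) exploit the point reflection about the first corner --- a deck transformation of the tiling --- to place the retrace at parameter $2T_1$ and to obtain $t_C + t_D \equiv 2T_1 \pmod{L}$, which is exactly what prevents both $90$--degree returns from being trapped in the arc between the two corners. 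Your version costs more setup but is tighter in the details: it pins down the exact parameter of every return and corner, whereas the paper's shorter case analysis leans on informally justified dynamical claims (e.g.\ that after a corner the ray retraces and returns from the NE, and that return patterns repeat periodically). Both arguments deliver the same conclusion, including the refinement that the closed trail arises from a ray in direction $(+1,+1)$ or $(-1,-1)$.
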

\begin{proof}
    Let $d \mid a$ or $d \mid b$. First, we assume that $d \mid a$. (The case $d \mid b$ can be treated similarly and gives the same result.) 
    We can write $a$ as a linear combination of $m+1$ and $n+1$ with integer coefficients as $a = \pm h(m+1) \mp k(n+1)$ where $h,k$ are positive integers.
    In addition, using $h'= hk(n+1)-h$ and $k'=hk(m+1)-k$, we also get $a = \mp h'(m+1) \pm k'(n+1)$.
    Without loss of generality, let us assume that
    \[ a = h(m+1) - k(n+1) \quad\text{and}\quad a = -h'(m+1)+k'(n+1), \]
    where $h,k,h'$ and $k'$ are positive integers.

    From $a=h(m+1)-k(n+1)$, we embed $2h(2k+1)$ copies (with $2h$ columns and $2k+1$ rows) of  $[0,m+1]\times[0,n+1]$ (which has $G_{m\times n}$ inside) on $[0,2h(m+1)]\times[0,(2k+1)(n+1)]$. 
    Any vertices $(i,j)$ and $(2p(m+1)\pm i,2q(n+1)\pm j)$ are from the same vertex in $[0,m+1]\times[0,n+1]$ where $p,q$ are integers. 
    In other words, the lines $x=\ell(m+1)$ for $1\leq\ell\leq 2h-1$ and the lines $y = \ell'(n+1)$ for $1\leq\ell'\leq 2k$ act as lines of reflection. 

    The equation $a = h(m+1) - k(n+1)$ can be rewritten as $2h(m+1)-2a = 2k(n+1)$. 
    This means the vertex $(a,b)$ and its copy at $(2h(m+1)-a, 2k(n+1)+b)$ can be joined by a straight line with slope $1$. 
    Since $(2h(m+1)-a,2k(n+1)+b)$ is a copy of $(a,b)$ on the $2h$-th column and $(2k+1)$-th row of $[0,m+1]\times [0,n+1]$, if we consider a non-stop ray from $(a,b)$ in $(+1,+1)$ direction (going to NE) that continues forever (does not stop at $(a,b)$ or at corners), it will return to $(a,b)$ (not necessary the first time) from SE. See Figure~\ref{fig:2}.

    For the case $a=-h'(m+1)+k'(n+1)$, it can be rewritten as $2h'(m+1)+2a = 2k'(n+1)$. We can embed $(2h'+2)$ columns and $(2k'+1)$ rows of $[0,m+1]\times[0,n+1]$ on $[-(2h'+1)(m+1), m+1 ]\times[-2k'(n+1), n+1]$. The vertices $(-2h'(m+1)-a,-2k'(n+1)+b)$ and $(a,b)$ can be joined by a straight line with slope $1$. So, there is a non-stop ray from $(a,b)$ in $(-1,-1)$ direction (going to SW) that returns to $(a,b)$ not necessary the first time) from NW.

    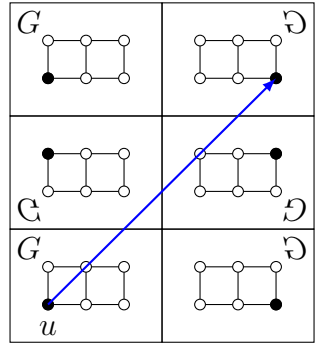
\begin{figure}[!ht]
    \centering
    \begin{tikzpicture}[scale=0.5]
        \begin{scope}
        \draw (0,0)--(4,0)--(4,3)--(0,3)--cycle; \node[] at (0.5,2.5) {$G$}; \draw[] (1,1)--(3,1) (1,2)--(3,2) (1,1)--(1,2) (2,1)--(2,2) (3,1)--(3,2); \foreach \i in {1,2,3} \foreach \j in {1,2} \node[circle, fill=white, draw, inner sep=0.05cm] at (\i,\j) {}; \node[circle, fill, draw, inner sep=0.05cm, label=below:{$u$}] at (1,1) {};
        \end{scope}
        \begin{scope}[xshift=8cm, xscale=-1]
        \draw (0,0)--(4,0)--(4,3)--(0,3)--cycle; \node[xscale=-1] at (0.5,2.5) {$G$}; \draw[] (1,1)--(3,1) (1,2)--(3,2) (1,1)--(1,2) (2,1)--(2,2) (3,1)--(3,2); \foreach \i in {1,2,3} \foreach \j in {1,2} \node[circle, fill=white, draw, inner sep=0.05cm] at (\i,\j) {};  \node[circle, fill, draw, inner sep=0.05cm] at (1,1) {};
        \end{scope}
        \begin{scope}[yshift=6cm, yscale=-1]
        \draw (0,0)--(4,0)--(4,3)--(0,3)--cycle; \node[yscale=-1] at (0.5,2.5) {$G$}; \draw[] (1,1)--(3,1) (1,2)--(3,2) (1,1)--(1,2) (2,1)--(2,2) (3,1)--(3,2); \foreach \i in {1,2,3} \foreach \j in {1,2} \node[circle, fill=white, draw, inner sep=0.05cm] at (\i,\j) {};  \node[circle, fill, draw, inner sep=0.05cm] at (1,1) {};
        \end{scope}
        \begin{scope}[xshift=8cm, yshift=6cm, yscale=-1, xscale=-1]
        \draw (0,0)--(4,0)--(4,3)--(0,3)--cycle; \node[xscale=-1, yscale=-1] at (0.5,2.5) {$G$}; \draw[] (1,1)--(3,1) (1,2)--(3,2) (1,1)--(1,2) (2,1)--(2,2) (3,1)--(3,2); \foreach \i in {1,2,3} \foreach \j in {1,2} \node[circle, fill=white, draw, inner sep=0.05cm] at (\i,\j) {};  \node[circle, fill, draw, inner sep=0.05cm] at (1,1) {};
        \end{scope}
        \begin{scope}[yshift=6cm]
        \draw (0,0)--(4,0)--(4,3)--(0,3)--cycle; \node[] at (0.5,2.5) {$G$}; \draw[] (1,1)--(3,1) (1,2)--(3,2) (1,1)--(1,2) (2,1)--(2,2) (3,1)--(3,2); \foreach \i in {1,2,3} \foreach \j in {1,2} \node[circle, fill=white, draw, inner sep=0.05cm] at (\i,\j) {};  \node[circle, fill, draw, inner sep=0.05cm] at (1,1) {};
        \end{scope}
        \begin{scope}[xshift=8cm, yshift=6cm, xscale=-1]
        \draw (0,0)--(4,0)--(4,3)--(0,3)--cycle; \node[xscale=-1] at (0.5,2.5) {$G$}; \draw[] (1,1)--(3,1) (1,2)--(3,2) (1,1)--(1,2) (2,1)--(2,2) (3,1)--(3,2); \foreach \i in {1,2,3} \foreach \j in {1,2} \node[circle, fill=white, draw, inner sep=0.05cm] at (\i,\j) {};  \node[circle, fill, draw, inner sep=0.05cm] at (1,1) {};
        \end{scope}
        \draw[-latex,thick,blue] (1,1)--(7,7);

        \begin{scope}[xshift=12cm]
        \draw (0,0)--(4,0)--(4,3)--(0,3)--cycle; \node[] at (0.5,2.5) {$G$}; \draw[] (1,1)--(3,1) (1,2)--(3,2) (1,1)--(1,2) (2,1)--(2,2) (3,1)--(3,2); \foreach \i in {1,2,3} \foreach \j in {1,2} \node[circle, fill=white, draw, inner sep=0.05cm] at (\i,\j) {}; \node[circle, fill, draw, inner sep=0.05cm, label=below:{$u$}] at (1,1) {};
        \draw[-latex,thick,blue] (1,1)--(3,3) (3,3)--(4,2) (4,2)--(2,0) (2,0)--(1,1);
        \end{scope}
    \end{tikzpicture}
    \caption{An embedding of multiple copies of $G_{3,2}$ on the rectangle $[0,8]\times[0,9]$ where $(h,k)=(1,1)$.}
    \label{fig:2}
    \end{figure}

    To finish the proof, it suffices to show that at least one of these two non-stop rays exiting from $(a,b)$ in directions $(+1,+1)$ and $(-1,-1)$ must return to $(a,b)$ for the first time from SE or NW without hitting a corner before returning. 
    This will conclude that there is a closed 90--degree trail at $(a,b)$.

    Assuming a contradiction, both non-stop rays exiting from $(a,b)$ in direction $(+1,+1)$ and $(-1,-1)$ return to $(a,b)$ for the first time from SW or NE, or hits a corner before returning. 
    
    We consider a non-stop ray exiting from $(a,b)$ in direction $(+1,+1)$. By the assumption, it returns for the first time from SW or NE, or hits a corner before returning. 
    Note that, by the first part of the proof, it must also return to $(a,b)$ (not necessary the first time) from SE.
    
    \begin{description}
        \item[Case 1] \textbf{It returns for the first time from SW.} The ray will leave to NE and return from SW, repeatedly. This is a contradiction to the fact that it must return (not necessary the first time) from SE.
        \item[Case 2] \textbf{It returns for the first time from NE.} The ray returns on the same path as it leaves $(a,b)$.
        The ray then continues its path and exits from $(a,b)$ in direction $(-1,-1)$. Now we think of this ray as a ray exiting from SW. From the assumption, its next return to $(a,b)$ must be from SW or NE.
        \begin{description}
            \item[Case 2.1] \textbf{The next return is from NE.} After returning, the ray will leave to SW and return from NE, repeatedly. This is a contradiction to the first part of the proof that it must return to $(a,b)$ (not necessary the first time) from SE.
            \item[Case 2.2] \textbf{The next return is from SW.} The ray then continues and exits from $(a,b)$ in direction $(+1,+1)$, and later returns from NE, leaves to SW, and returns from SW, repeatedly. This is a contradiction to the first part of the proof that it must return to $(a,b)$ (not necessary the first time) from SE.
        \end{description}
        \item[Case 3] \textbf{It hits a corner before returning.} Since the ray exiting from $(a,b)$ in direction $(+1,+1)$, if it hits a corner before returning, then it must later returns to $(a,b)$ for the first time from NE. From Case 2, we get a contradiction.
    \end{description}
    All cases lead to a contradiction, hence at least one of these two non-stop rays exiting from $(a,b)$ in directions $(+1,+1)$ and $(-1,-1)$ must return to $(a,b)$ for the first time from SE or NW without hitting a corner before returning.
    So, there must be a closed 90--degree trail at $(a,b)$.
\end{proof}

\begin{lemma}\label{lem:N2_proper_label}
    The regions obtained from a closed 90--degree trail has a proper labeling that all unbounded regions are negative.
\end{lemma}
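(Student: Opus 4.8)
The plan is to argue that the closed 90-degree trail, viewed as a closed curve in the plane, is the image of a single non-self-intersecting loop, so that its complement in the rectangle is two-colorable; the extra requirement — that the unbounded regions all receive the same color — then follows from a parity count of how many trail segments separate a generic point of such a region from the exterior of the rectangle. First I would set up the trail as a planar graph $T$ drawn inside the closed rectangle $R=[0,m+1]\times[0,n+1]$: its edges are the straight segments making up the trail, its vertices are $v$, the reflection points on the sides of $R$, and their crossing points. The key structural observation is that at every interior vertex of $T$ other than $v$ — i.e. at each reflection point on a side and at each self-crossing in the interior — exactly an even number of segment-ends (two, or four at a transversal crossing) meet, and at $v$ exactly two ends meet; so $T$ is an even graph except that deleting the two half-edges at $v$ and gluing them leaves an Eulerian closed walk with all vertices of even degree. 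Hence the trail, including the "virtual" passage through $v$, bounds a well-defined notion of inside/outside via the mod-2 winding number (Jordan-curve/parity argument): define $f(p)\in\{+,-\}$ for a point $p$ in the interior of a region by counting, mod $2$, the number of transversal intersections of a generic ray from $p$ to a point outside $R$ with the segments of the trail. Since crossing one segment flips the count and two adjacent regions differ by exactly one such crossing, $f$ is a proper labeling.

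Next I would pin down the labels of the unbounded regions. Take $p$ in an unbounded region and shoot a generic ray from $p$ straight out through the nearest side of $R$; because that region touches the boundary of $R$ and the trail lies strictly inside $R$ except where it meets the sides at reflection points (which a generic ray avoids), this ray crosses the trail zero times. So every unbounded region gets the label coming from crossing-parity $0$; declaring that label "negative" fixes the convention, and then the other color class is "positive." The only subtlety is checking that the ray can indeed be chosen to leave $R$ through a boundary edge of that very region without first re-entering the trail — but that is immediate since the region is open, connected, and has part of $\partial R$ on its boundary, and the trail meets $\partial R$ only in finitely many reflection points which we avoid; a short local argument near the chosen boundary edge suffices.

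The main obstacle, and the step I would spend the most care on, is the claim that the parity function $f$ is well-defined and genuinely two-valued — that is, that the closed 90-degree trail really does behave like a (generalized) Jordan curve. The worry is the self-crossings: a trail with slope $\pm 1$ segments can cross itself transversally, and a figure-eight-type curve is still fine for mod-2 winding number, but one must make sure no segment is traversed twice (which would break the "crossing flips parity" bookkeeping) and that tangential contacts do not occur. For the first point I would invoke Lemma~\ref{lem:N1_closed_90_exist}'s construction: the trail comes from a ray of slope $+1$ leaving $v$ and returning with slope $-1$ (or the mirror), and in the unfolded picture this is a single straight segment, so no lattice segment of the rectangle is covered twice; tangencies are ruled out because two slope-$\pm1$ lines either coincide or cross transversally. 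With edge-repetition and tangency excluded, the standard mod-2 argument — every generic closed curve partitions the plane into regions whose labels are consistent under single transversal crossings — applies verbatim, and combined with the boundary-ray computation of the previous paragraph it yields a proper labeling in which all unbounded regions are negative. I would then remark that "negative on the unbounded regions" determines the labeling uniquely, which is what later lemmas (on building the even kernel) will use.
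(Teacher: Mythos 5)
Your proof is correct, but it takes a genuinely different route from the paper's. The paper first \emph{extends} the two segments meeting at the root until they hit $\partial R$, so that the arrangement becomes (in effect) a union of full boundary-to-boundary chords, takes the evident proper $2$-coloring of that chord arrangement, then switches the labels of the regions cut off by the two extension pieces, erases the extension, observes that consecutive unbounded regions now agree, and finally flips all labels globally if the unbounded ones came out positive. You instead work with the closed trail directly as a generic closed curve and label each region by the mod-$2$ number of transversal crossings of a generic path to the exterior of $R$: properness is immediate since crossing a single segment flips the parity, and the unbounded regions automatically receive parity $0$ (hence ``negative'') because they can be exited through $\partial R$ without meeting the trail. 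What your approach buys is that it makes explicit, and actually proves, the Jordan-curve-type fact that the paper disposes of with ``it is easy to see,'' and it delivers the boundary condition for free rather than by a final global flip; what it costs is the obligation to check that the trail really is a closed curve in general position --- no segment traversed twice, no tangential self-contacts --- which you correctly discharge: slope-$\pm1$ segments can only meet transversally or coincide, coincidence in the same direction would force an earlier return to the root (contradicting first return), and reversal of direction can only happen at a corner, which a closed trail avoids. One small caveat: your literal ``straight ray out through the nearest side'' need not stay inside the unbounded region, so you do need the path-independence of the mod-$2$ crossing count to replace it by a path running inside the region to a non-reflection boundary point; you flag this and the standard fact covers it, so the argument stands.
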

\begin{proof}
    Consider a closed 90--degree trail at a vertex $u$ of $G_{m\times n}$. We extend the two straight line segments touching $u$ so that the other end of each segment hits the boundary of the rectangle $[0,m+1]\times[0,n+1]$. It is easy to see that there is a proper labeling of the regions separated by this extended trail. We then switch the labels of the regions enclosed by the extended part of the two segments. After removing the extended part, we are left with a proper labeling. Since two consecutive unbounded regions share a common neighboring region, they have the same label. If the label of the unbounded regions is negative, we are done. Otherwise, we switch the label of every region and get the desired labeling. See Figure~\ref{fig:3} for an example.
\end{proof}
\begin{figure}[!ht]
    \centering
    \begin{subfigure}{0.45\textwidth}
        \includegraphics[width=\textwidth]{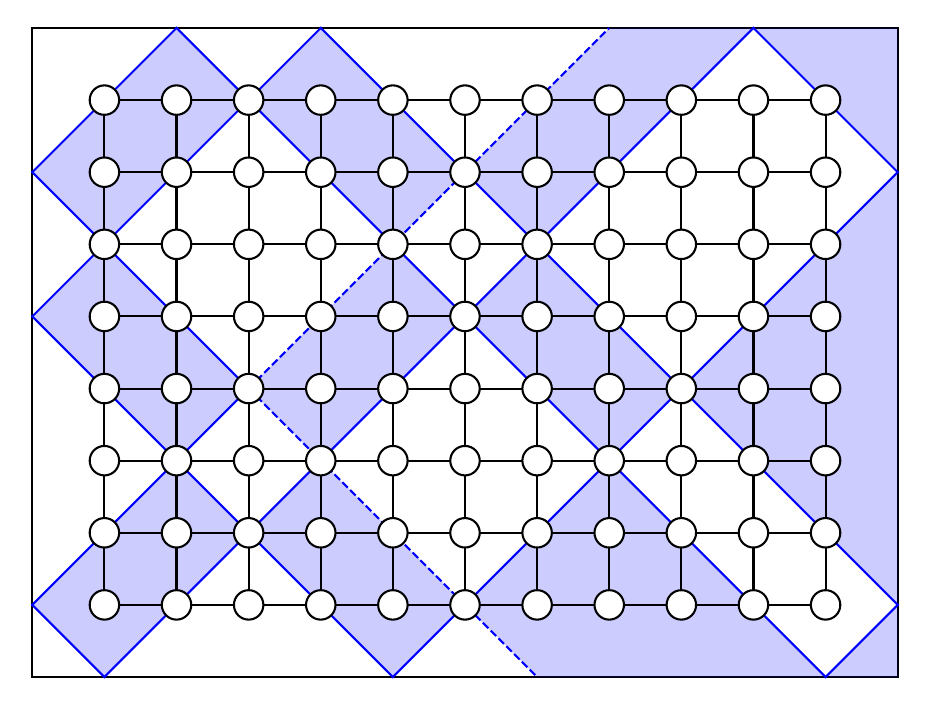}
        \caption{}
    \end{subfigure}
    \hspace{0.05\textwidth}
    \begin{subfigure}{0.45\textwidth}
        \includegraphics[width=\textwidth]{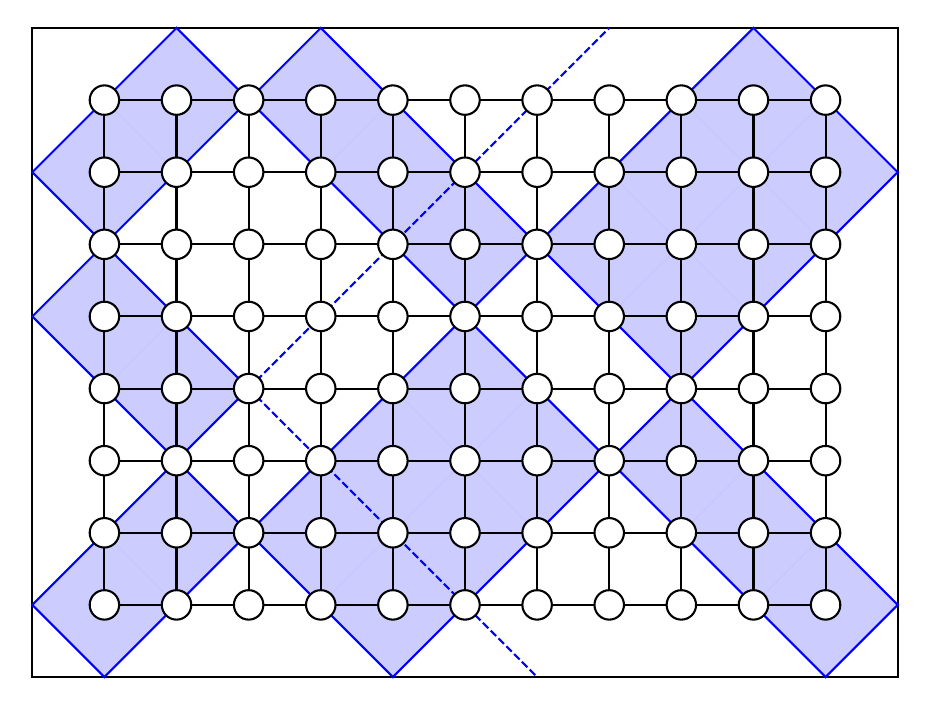}
        \caption{}
    \end{subfigure}
    \caption{(a) A proper labeling of the regions obtained from the extended trail at the vertex $(3,4)$ of $G_{11\times 8}$. (b) The result after switching the labels of the regions on the right of the extended segments. The positive regions are shown in blue.  All unbounded regions are negative.}
    \label{fig:3}
\end{figure}
    
\begin{lemma}\label{lem:N3_ek_90_trail}
    Let $\phi$ be a 90--degree trail at a vertex $u$ of the grid graph $G_{m\times n}$. 
    Let $\alpha$ be a proper labeling of the regions separated by a closed 90--degree trail a vertex $u=(a,b)$ of the grid graph $G_{m\times n}$ such that all unbounded regions are negative. Let $v$ be a neighbor of $u$ in a positive region.
    Then the set of vertices $(i,j)$ such that $i+j \not\equiv a+b \pmod{2}$ which are inside positive regions forms an even kernel of $G_{m\times n} - uv$ containing $v$.
\end{lemma}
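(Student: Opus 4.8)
The plan is to verify directly that
\[
S \;:=\; \{\, w\in V(G_{m\times n}) : w \text{ has parity different from } u,\ w \text{ lies in the interior of a positive region} \}
\]
is an even kernel of $G:=G_{m\times n}-uv$ containing $v$; that is, that $v\in S$ (so $S\neq\varnothing$), that $S$ is independent, and that $\lvert N_{G}(w)\cap S\rvert$ is even for every $w\notin S$. The workhorse is a parity fact about the trail: every diagonal segment of the trail lies on a line $y=\pm x+c$ with $c\in\mathbb Z$, so consecutive lattice points on a segment have the same parity, and a reflection off a side of $[0,m+1]\times[0,n+1]$ preserves the parity of the lattice points met; hence every vertex of $G_{m\times n}$ lying on the trail has the parity of $u$. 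It follows that $S$ is contained in one color class of the bipartite graph $G_{m\times n}$, so $S$ is independent in $G_{m\times n}$ and hence in $G$; and $v$, being a neighbor of $u$, has parity different from $u$ and lies in a positive region by hypothesis, so $v\in S$.

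For the even-kernel condition, the easy case first: if $w$ has parity different from $u$, then $w\neq u$ and $w\neq v$ (the latter since $v\in S$), so $N_{G}(w)=N_{G_{m\times n}}(w)$, and every such neighbor has the parity of $u$ and thus is not in $S$; so $\lvert N_{G}(w)\cap S\rvert=0$. From now on $w$ has the parity of $u$, and $N_{G_{m\times n}}(w)\cap S$ is precisely the set of neighbors of $w$ lying in the interior of a positive region. Here I would record two geometric observations. First, a trail segment meets a unit edge of $G_{m\times n}$ only at a shared endpoint: a slope-$\pm1$ line through lattice points crosses an axis-parallel unit segment only at lattice points, and the turning points of the trail ($u$, the reflection points, the rectangle corners) are either grid vertices or points of the rectangle boundary, hence never interior to a grid edge. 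Second, as a consequence, if $w$ is not on the trail then all neighbors of $w$ (which, having parity opposite $u$, are themselves off the trail) lie in the same region as $w$; while if $w$ is on the trail then along each edge at $w$ the neighbor lies in whichever of the regions locally bordering $w$ that edge enters.

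I would then distinguish three cases. \emph{Case $w\notin\{u,v\}$, $w$ not on the trail}: $w$ lies in the interior of a region $R$; if $R$ is negative the count is $0$; if $R$ is positive then $w$ cannot have degree $3$ in $G_{m\times n}$, because a degree-$3$ vertex of $u$'s parity that is off the trail lies in a region having a side on the adjacent side of the rectangle, hence in an unbounded (so negative) region --- this is seen by firing the unit segment from $w$ straight at that side and noting, via the parity fact, that it meets no trail point and that its far endpoint is not a reflection point --- so $\deg_{G_{m\times n}}(w)\in\{2,4\}$ and the count equals this degree. \emph{Case $w\notin\{u,v\}$, $w$ on the trail}: the trail runs straight through $w$ along one diagonal line, or two if $w$ is a self-crossing point, cutting a neighborhood of $w$ into $2$ or $4$ sectors whose labels under the labeling $\alpha$ alternate; distributing the at most four neighbors of $w$ among these sectors, and using that a sector possessing a side on a side of the rectangle must be negative (this handles boundary $w$; note that a grid corner on the trail is forced to carry the diagonal that avoids the rectangle corner), one gets the count equal to $0$ or $2$. \emph{Case $w=u$}: the edge $uv$ is deleted; the trail makes a $90^\circ$ corner at $u$, splitting a neighborhood of $u$ into a $90^\circ$ sector $W_1$ that contains at most one neighbor of $u$ and a $270^\circ$ sector $W_2$ that contains the rest, exactly one of $W_1,W_2$ being positive; a short bookkeeping argument, again invoking that a sector with a side on the rectangle boundary is negative (this rules out the dangerous configurations), shows the count is $0$ when $v\in W_1$ and is $0$ or $2$ when $v\in W_2$.

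The parity bookkeeping and the generic subcases are routine; the step I expect to be the main obstacle is controlling the boundary instances of the last two cases and the vertex $w=u$, where one has to exclude the configurations that would make the count odd --- a vertex of $u$'s parity lying in a positive region yet having degree $3$, or $u$ retaining an odd number of $S$-neighbors after deletion of $uv$. The single mechanism that disposes of all of these, which I would isolate as an auxiliary claim proved once, is that any region --- or any local sector at a trail vertex --- having a full side on a side of the rectangle is unbounded, hence labeled negative by Lemma~\ref{lem:N2_proper_label}; the care is entirely in verifying this against the diagonal geometry of the trail near the boundary.
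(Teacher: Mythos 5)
Your proof is correct and follows essentially the same route as the paper's: verify the even-kernel condition by cases on whether $w$ has the parity of $u$, lies off the trail in a positive or negative region, lies on the trail, or equals $u$. The one genuine addition is your explicit treatment of grid-boundary vertices (degree $2$ or $3$) via the auxiliary claim that a region or sector with a side on the rectangle boundary is unbounded and hence negative — the paper's counts ($4$, $0$, $2$, and ``$1$ or $3$'' at $u$) are stated only for the interior situation and silently rely on exactly this fact, so your extra care closes a real gap rather than changing the argument.
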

\begin{proof}
    We say that the vertices $(i,j)$ and $(i',j')$ have the same parity if $i+j\equiv i'+j' \pmod{2}$.
    Let $K$ be the set of vertices of different parity from $u$ which are inside positive regions. We note that $v$ is in $K$ and has the same parity as every vertex in $K$. Also, $u$ has different parity to $v$ and has the same parity as every vertex on the trail.
    We now show that $K$ is an even kernel.
    First, we consider the vertex $u$. Since $u$ is adjacent in $G_{m\times n}$ to 1 or 3 vertices in $K$, it is adjacent in $G_{m\times n}-uv$ to 0 or 2 vertices in $K$. Let $w\neq u$ be a vertex of $G_{m\times n}$ that has the same parity as $u$. Consider the following cases.
    \begin{description}
        \item[Case 1] \textbf{$w$ is inside a positive region.} Then it has 4 neighbors in $K$.
        \item[Case 2] \textbf{$w$ is inside a negative region.} Then it has no neighbor in $K$.
        \item[Case 3] \textbf{$w$ is on the trail.} Then it has 2 neighbors in $K$.
    \end{description}
    From all cases, $w$ has even number of neighbors in $K$. Hence, $K$ is an even kernel of $G_{m\times n} - uv$ containing $v$.
\end{proof}

\begin{theorem} \label{thm:N-pos}
    Let $G= G_{m\times n}$ where $d = \gcd(m+1,n+1)$ and $u=(a,b)$ be a vertex of $G$. If $d \mid a$ or $d \mid b$, then $(G,u)$ is an N-position.
\end{theorem}
\begin{proof}
    From Lemma~\ref{lem:N1_closed_90_exist}, there is a closed 90--degree trail at $u$. We then get a proper labeling on the regions separated by the trail where all unbounded regions have the negative label. Let $v$ be a neighbor of $u$ in a positive region. Lemma~\ref{lem:N3_ek_90_trail} implies that a winning move from a vertex $u$ is to go to $v$. Hence, $(G,u)$ is an N-position.
\end{proof}

\subsection{P-position and 180--degree trail}

Lemmas~\ref{lem:P1_180_trail_exist} and \ref{lem:P2_ek_180_trail} are used to prove Theorem~\ref{thm:P-pos}.

\begin{lemma} \label{lem:P1_180_trail_exist}
    Let $G= G_{m\times n}$ where $d = \gcd(m+1,n+1)$ and $v=(a,b)$ be a vertex of $G$.
    If $d \nmid a$ and $d \nmid b$, then there is a (not necessary closed) 180--degree trail at $v$.
\end{lemma}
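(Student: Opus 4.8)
The plan is to examine the trail at $v=(a,b)$ obtained from the two rays issued in the slope-$+1$ directions $(+1,+1)$ and $(-1,-1)$; call their union the \emph{trajectory} through $v$. As in the proof of Lemma~\ref{lem:N1_closed_90_exist}, I tile the plane by reflected copies of $R=[0,m+1]\times[0,n+1]$, so that the lines $x=k(m+1)$ and $y=\ell(n+1)$ are mirrors; then the trajectory unfolds to a single straight line $L=\{(a+t,b+t):t\in\mathbb R\}$, the two rays being its halves $t\ge 0$ and $t\le 0$. Under this correspondence the trajectory meets a corner of $R$ exactly when $L$ passes through a point of $\Lambda=(m+1)\mathbb Z\times(n+1)\mathbb Z$, and it returns to $v$ exactly when $L$ passes through a point $(2k(m+1)+\varepsilon_1 a,\ 2\ell(n+1)+\varepsilon_2 b)$ of the reflection-orbit of $v$, with $\varepsilon_1,\varepsilon_2\in\{+1,-1\}$. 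Here $\varepsilon_1$ records the parity of the number of vertical mirrors crossed, so the horizontal component of the direction is preserved iff $\varepsilon_1=+1$, and likewise for $\varepsilon_2$; hence the return is straight through $v$ when $(\varepsilon_1,\varepsilon_2)=(+1,+1)$, reversed when $(\varepsilon_1,\varepsilon_2)=(-1,-1)$, and a $90^\circ$ turn when $\varepsilon_1\varepsilon_2=-1$.

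The first step is to exclude the $90^\circ$ returns. Requiring that an orbit point with $(\varepsilon_1,\varepsilon_2)=(-1,+1)$ lie on $L$ reduces to the equation $k(m+1)-\ell(n+1)=a$, which is solvable in integers iff $d\mid a$; the case $(\varepsilon_1,\varepsilon_2)=(+1,-1)$ reduces to $k(m+1)-\ell(n+1)=-b$, solvable iff $d\mid b$. Since $d\nmid a$ and $d\nmid b$, no such point lies on $L$, so the trajectory never returns to $v$ with a $90^\circ$ turn. Therefore the only straight-line segments of the trail that touch $v$ are the two initial segments of the rays, which are collinear; so once we know both rays terminate --- at a corner, or at a straight-through return to $v$ --- the trail is automatically a $180^\circ$ trail at $v$.

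Next I would settle termination and the open/closed dichotomy. The condition that a corner lie on $L$ and the condition that a reversed-return orbit point lie on $L$ both reduce to the single equation $k(m+1)-\ell(n+1)=a-b$, solvable iff $d\mid(a-b)$, whereas a straight-through orbit point on $L$ is always available, at the parameters $t$ that are multiples of $2(m+1)(n+1)/d=2\,\mathrm{lcm}(m+1,n+1)$. If $d\nmid(a-b)$, then $L$ meets no corner and no reversed-return point, so the only returns to $v$ are straight through; the trajectory is periodic and each ray runs back into $v$ straight through once $t$ changes by $2(m+1)(n+1)/d$, so the trail is a closed $180^\circ$ trail. If $d\mid(a-b)$, a direct computation shows the corner parameters on $L$ form a coset $c+P\mathbb Z$ with $P=(m+1)(n+1)/d$ and $0<c<P$ --- indeed $c\neq 0$, since $v$ lies in the interior of $R$ and so is not a corner --- while the reversed-return parameters form the coset $2c+2P\mathbb Z=2(c+P\mathbb Z)$, exactly double the corner parameters. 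Hence, going from $v$ along $L$ in either direction, the nearest corner (at parameter $c$, resp.\ $c-P$) is strictly closer to $v$ than both the nearest reversed-return point and the nearest straight-through point; so each of the two rays stops at a corner before ever returning to $v$, and the trail is an open $180^\circ$ trail.

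The step I expect to be the main obstacle is the reversed-return case: a priori such a return would make the trail double back at $v$, leaving only one segment touching $v$ instead of two, which would wreck the conclusion. The resolution is exactly the observation just made --- the reversed-return parameters on $L$ are twice the corner parameters, so a reversed return is always preempted by a corner, where the ray stops. Everything else is routine bookkeeping: tracking the parities of reflections under the unfolding, and checking the solvability of a handful of linear Diophantine equations in two unknowns.
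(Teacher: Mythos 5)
Your proof is correct and follows the same unfolding-plus-Diophantine approach as the paper: both reduce the forbidden $90^\circ$ returns to the solvability of $k(m+1)-\ell(n+1)=a$ (resp.\ $-b$), which fails because $d\nmid a$ and $d\nmid b$. You go further than the paper by explicitly classifying when the trail is open versus closed (via $d\mid(a-b)$) and by showing that a doubling-back return is always preempted by a corner hit --- details the paper leaves implicit --- but the core argument is the same.
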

\begin{proof}
    Under the same construction as in the proof of Lemma~\ref{lem:N1_closed_90_exist}, we consider the tiling of $[0,m+1]\times[0,n+1]$ with $2(n+1)+1$ columns and $2(m+1)+1$ rows.
    It is clear that there is a non-stop ray leaving $(a,b)$ in the direction $(+1,+1)$ to $(2(n+1)(m+1)+a, 2(m+1)(n+1)+b)$. Hence, a non-stop ray leaving from $(a,b)$ in the direction $(+1,+1)$ must return to $(a,b)$ (not necessary the first time) from SW.

    Next, we will show that the ray must never return to $(a,b)$ from SE or NW (possibly hitting corners).
    Assume to the contrary that the ray returns to $(a,b)$ from SE or NW.
    Then there exist positive integers $h,k$ such that there is a straight line with slope $1$ joining $(a,b)$ to $(2h(m+1) - a,2k(n+1) + b)$ or $(2h(m+1)+a,2k(n+1) - b)$.
    Without loss of generality, we can assume that there is a straight line with slope $1$ joining $(a,b)$ and $(2h(m+1)-a,2k(n+1)+b)$.
    Thus
    \begin{equation*}
        \frac{(2k(n+1) + b)-b}{(2h(m+1)-a)-a} =1.
    \end{equation*}
    So $a=h(m+1)- k(n+1)$.
    Since $d=\gcd(m+1,n+1)$, we get $d\mid a$, a contradiction.
    Hence, the ray will never return to $(a,b)$ from SE or NW.
    Therefore, there is a (not necessary closed) 180--degree trail at $v$.
\end{proof}

\begin{lemma} \label{lem:P2_ek_180_trail}
    Let $v$ be a vertex of $G=G_{m\times n}$.
    The set of vertices on a 180--degree trail at $v$ that are on exactly one straight line segment forms an even kernel of $G$ containing $v$.
\end{lemma}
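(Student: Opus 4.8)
The plan is to verify directly that the set $S$ in the statement satisfies the three defining conditions of an even kernel: it is nonempty, it is independent, and $|N(u)\cap S|$ is even for every $u\notin S$. I would do this after recording the combinatorial skeleton of a 180--degree trail.

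Fix a 180--degree trail $T$ at $v=(a,b)$. The first structural point is that, after merging its two collinear pieces at $v$, the trail $T$ decomposes into \emph{diagonal chords} of the rectangle $[0,m+1]\times[0,n+1]$ --- maximal slope-$\pm1$ segments running from boundary to boundary: between consecutive bounces $T$ follows such a chord in full, and every bounce point has integer coordinates because the line through $v$ carrying $T$ has the form $y=\pm x+c$ with $c\in\mathbb Z$, a property preserved by each reflection. Consequently the segments of $T$ are exactly the chords it traverses, so a vertex $q$ of $G$ lies on exactly one segment of $T$ precisely when exactly one of the two diagonal chords through $q$ is traversed by $T$; in particular $v$, which by the definition of a 180--degree trail lies on exactly one segment of $T$, is in $S$, and $S\neq\varnothing$. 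Moreover, consecutive chords of $T$ meet at an integer point, and a diagonal chord joining two integer points has all its lattice points of a single parity, so every vertex of $G$ on $T$ --- in particular every vertex of $S$ --- has the parity of $v$; since the two colour classes of the bipartite graph $G$ are exactly the parity classes, $S$ is independent.

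For the even-neighborhood condition, let $u\notin S$. If $u$ has the parity of $v$, then every $G$-neighbor of $u$ has the opposite parity and hence lies outside $S$, so $|N(u)\cap S|=0$. Otherwise all of the (two, three, or four) $G$-neighbors of $u$ have the parity of $v$. Around $u$ consider the four diagonal chords $c_{ES},c_{EN},c_{WN},c_{WS}$, indexed by the pairs of neighbor-positions of $u$ they pass through (a neighbor-position lying on a wall of the rectangle being permitted); each actual neighbor of $u$ lies on exactly two of these chords, one of each slope, and belongs to $S$ if and only if exactly one of those two is traversed by $T$. Recording with $\alpha,\beta,\gamma,\delta\in\{0,1\}$ whether $T$ traverses $c_{ES},c_{EN},c_{WN},c_{WS}$ respectively, for an interior vertex $u$ this yields
\[
|N(u)\cap S|=(\alpha\oplus\beta)+(\alpha\oplus\delta)+(\gamma\oplus\beta)+(\gamma\oplus\delta)\equiv 2(\alpha+\beta+\gamma+\delta)\equiv 0\pmod{2}.
\]
If $u$ lies on an edge (resp.\ at a corner) of $G$, the one (resp.\ two) absent neighbor-positions are non-corner integer points on the boundary of the rectangle at which $T$ must bounce; there $T$ traverses the slope-$+1$ chord if and only if it traverses the slope-$-1$ chord. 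This forces $\gamma=\delta$ (resp.\ $\alpha=\gamma=\delta$), and the corresponding partial sum of the four terms above is again $\equiv 0\pmod 2$. Hence $|N(u)\cap S|$ is even for all $u\notin S$, and $S$ is an even kernel of $G$ containing $v$.

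The step I expect to be the main obstacle is the boundary and corner analysis of the last paragraph: making the reflection-linking precise --- in particular checking that $T$ genuinely bounces, rather than stopping, at the relevant wall points, which uses that those points are neither corners of the rectangle nor equal to $v$ --- and tracking, in these degenerate positions, which of $c_{ES},\dots,c_{WS}$ coincide or terminate at a corner of the rectangle, so that the indicator count is still seen to be even. A more bureaucratic preliminary is pinning down the conventions, namely what a "segment" of $T$ is and how the two collinear pieces of $T$ at $v$ are merged, so that $v$ lies on exactly one segment and hence $v\in S$.
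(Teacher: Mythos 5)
Your proof is correct and follows the same local-counting strategy as the paper's, which simply enumerates (via a figure) the possible configurations of segments adjacent to a vertex $u$ and observes that $0$, $2$, or $4$ of its neighbours lie on exactly one segment. Your XOR bookkeeping with the four chords $c_{ES},c_{EN},c_{WN},c_{WS}$ is a cleaner formalization of that enumeration, and your explicit treatment of independence, of the merging convention at $v$, and of the reflection-linking $\gamma=\delta$ at boundary and corner vertices supplies details the paper leaves implicit.
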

\begin{proof}
    Let $K$ be the set of vertices on a 180--degree trail at $v$ that are on exactly one straight line segment. It is clear that $v \in K$. Let $u$ be a vertex having a neighbor on the trail. We can run through all cases by the number of straight line segments whom $u$ is next to, see Figure~\ref{fig:4}. We see that $u$ can only have $0$, $2$ or $4$ neighbors in $K$. Hence, $K$ is an even kernel of $G$ containing $v$.
\end{proof}

\begin{figure}[!ht]
    \centering
    \resizebox{0.8\textwidth}{!}{
        \begin{tikzpicture}
        \tikzset{every node/.style={circle,thick,draw,fill=white,inner sep=0.12cm}}
            \draw[ultra thick, blue] (0,0) +(0.3,1.3) -- +(-1.3,-0.3); 
            \draw[ultra thick, blue] (4,0) +(0.3,1.3) -- +(-1.3,-0.3) +(-0.3,1.3) -- +(1.3,-0.3);
            \draw[ultra thick, blue] (8,0) +(0.3,1.3) -- +(-1.3,-0.3) +(-0.3,-1.3) -- +(1.3,0.3);
            \draw[ultra thick, blue] (12,0) +(0.3,1.3) -- +(-1.3,-0.3) +(-0.3,1.3) -- +(1.3,-0.3) +(-0.3,-1.3) -- +(1.3,0.3);
            \draw[ultra thick, blue] (16,0) +(0.3,1.3) -- +(-1.3,-0.3) +(-0.3,1.3) -- +(1.3,-0.3) +(-0.3,-1.3) -- +(1.3,0.3) +(-1.3,0.3) -- +(0.3,-1.3);
            \foreach \x in {0,4,8,12,16} {
                \draw (\x-1,0)--(\x+1,0) (\x,-1)--(\x,1);
                \node at (\x,0) {}; \node at (\x,1) {}; \node at (\x,-1) {}; \node at (\x-1,0) {}; \node at (\x+1,0) {};
            };
        \end{tikzpicture}
    }
    \caption{All possibilities of the neighbors on a trail.}
    \label{fig:4}
\end{figure}
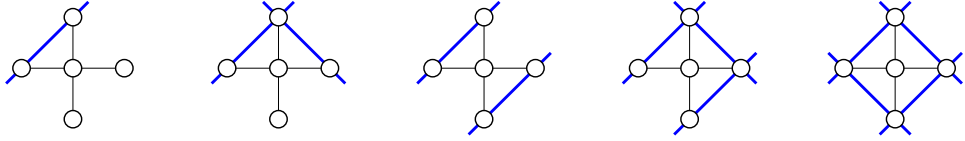

\begin{theorem} \label{thm:P-pos}
    Let $G= G_{m\times n}$ where $d = \gcd(m+1,n+1)$ and $v=(a,b)$ be a vertex of $G$.
    If $d \nmid a$ and $d \nmid b$, then $(G,v)$ is a P-position.
\end{theorem}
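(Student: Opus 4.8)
The plan is to combine the two preceding lemmas with the characterization of P-positions in bipartite graphs from Theorem~\ref{EvenKernel}. Since the grid graph $G_{m\times n}$ is bipartite, it suffices to exhibit an even kernel of $G$ that contains $v$; the theorem then immediately gives that $(G,v)$ is a P-position.

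First I would invoke Lemma~\ref{lem:P1_180_trail_exist}: because $d \nmid a$ and $d \nmid b$, there exists a (not necessarily closed) 180--degree trail $\phi$ at $v$. This is exactly the hypothesis that feeds the next step. Then I would apply Lemma~\ref{lem:P2_ek_180_trail} to this trail $\phi$: the set $K$ of vertices lying on exactly one of the two straight line segments of $\phi$ is an even kernel of $G$ containing $v$. Finally, since $G = G_{m\times n} = P_m \square P_n$ is bipartite, Theorem~\ref{EvenKernel} applies, and as $v \in K$ with $K$ an even kernel, we conclude that $(G,v)$ is a P-position. This is essentially a three-line deduction.

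The main subtlety — though it is already handled by the cited lemmas rather than by this proof itself — lies in Lemma~\ref{lem:P1_180_trail_exist}, namely that every ray leaving $v$ genuinely closes up into a 180--degree configuration (either reaching a corner or returning anti-parallel) rather than coming back at $90$ degrees; the divisibility hypothesis $d \nmid a$, $d \nmid b$ is precisely what rules out the $90$--degree return via the linear-combination argument. In the write-up of Theorem~\ref{thm:P-pos} itself I do not expect any obstacle: the only thing to be careful about is to state explicitly that $G_{m\times n}$ is bipartite (two vertices being adjacent only when their coordinate sums differ in parity), so that the ``only if'' direction of Theorem~\ref{EvenKernel} — or rather its ``if $v$ is in an even kernel'' direction — can be invoked. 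Thus the proof is a clean assembly of Lemmas~\ref{lem:P1_180_trail_exist} and \ref{lem:P2_ek_180_trail} together with Theorem~\ref{EvenKernel}.
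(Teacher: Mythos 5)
Your proposal is correct and follows essentially the same route as the paper: obtain a 180--degree trail from Lemma~\ref{lem:P1_180_trail_exist}, turn it into an even kernel containing $v$ via Lemma~\ref{lem:P2_ek_180_trail}, and conclude with Theorem~\ref{EvenKernel} (whose first part in fact needs no bipartiteness for this direction). The only difference is that you spell out the appeal to Theorem~\ref{EvenKernel} explicitly, which the paper leaves implicit.
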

\begin{proof}
    Lemma~\ref{lem:P1_180_trail_exist} gives a 180--degree trail at $v$. Then, we can construct an even kernel of $G$ containing $v$ using Lemma~\ref{lem:P2_ek_180_trail}. Hence, $(G,v)$ is a P-position.
\end{proof}

\begin{example}
Consider $G=G_{11\times 8}$.
In Figure~\ref{fig:5}(a), an even kernel of $G$ containing a vertex $(2,4)$ is obtained from a closed 180--degree trail at $(2,4)$. So, $(G,(2,4))$ is a P-position.

On the other hand, at the vertex $u=(3,4)$, there is a winning move to $v=(2,4)$. This is confirmed by an even kernel of $G-uv$ containing $v$ obtained from a closed 90--degree trail at $u$ as shown in Figure~\ref{fig:5}(b).
\end{example}
\begin{figure}
    \centering
    \begin{subfigure}{0.45\textwidth}
        \includegraphics[width=\textwidth]{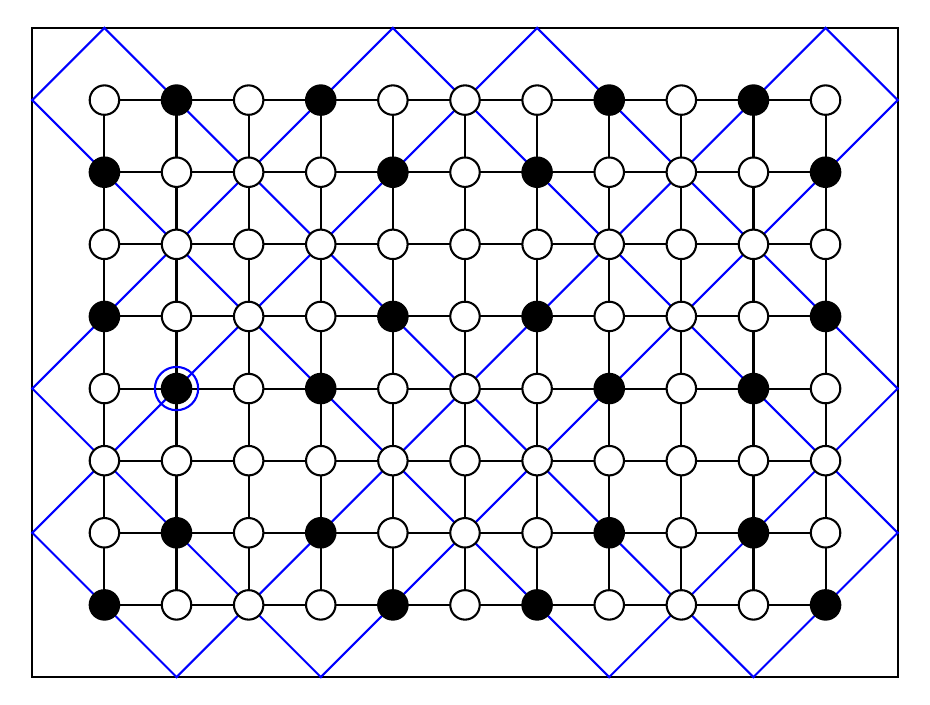}
        \caption{}
    \end{subfigure}
    \hspace{0.05\textwidth}
    \begin{subfigure}{0.45\textwidth}
        \includegraphics[width=\textwidth]{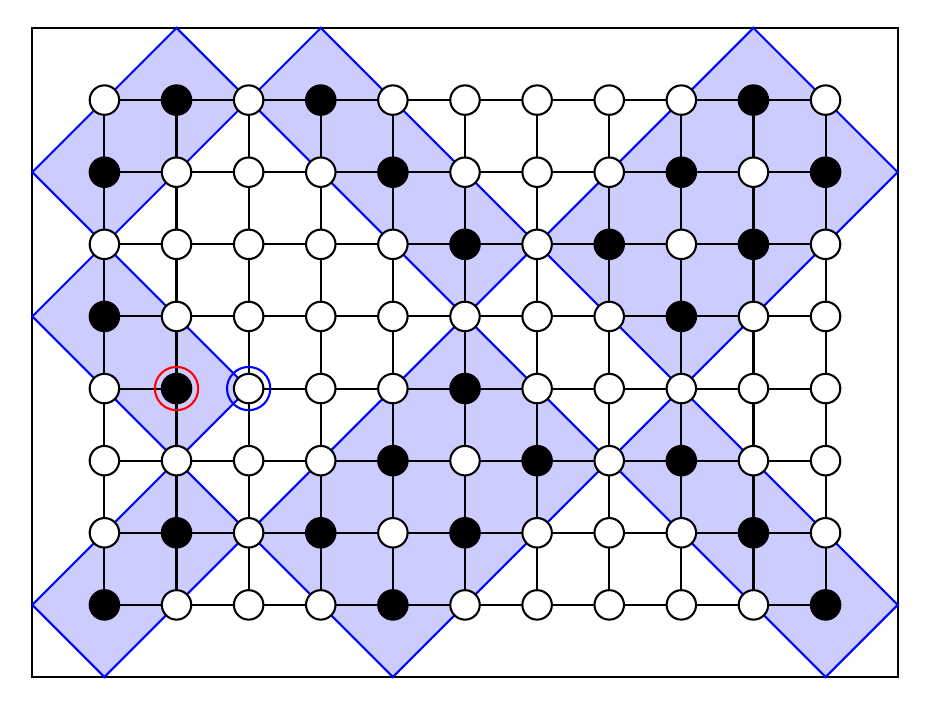}
        \caption{}
    \end{subfigure}
    \caption{(a) An even kernel (black vertices) of $G_{11\times 8}$ containing $(2,4)$ (blue circle). (b) An even kernel (black vertices) of $G_{11\times 8}-uv$ where $u=(3,4)$ (blue circle) and $v=(2,4)$ (red circle). The positive regions are shown in blue.}
    \label{fig:5}
\end{figure}

Note that the even kernel obtained from a 180--degree trail in Lemma~\ref{lem:P2_ek_180_trail} has a very nice pattern which can be explicitly computed. It has a repetitive pattern modulo $2d$ in both coordinates. In fact, it has a repetitive pattern modulo $d$ up to reflections.

\begin{corollary}
    Let $G= G_{m\times n}$ where $d = \gcd(m+1,n+1)$. Then the set
    \[
        S_k
        = \{(i,j) : d \nmid i \text{,  } d\nmid j \text{ and  } i\pm j \equiv \pm k  \pmod {2d}\}
    \]
    is an even kernel of $G$ for all $k=0,1,\dots,d$.
    Furthermore, a vertex $(a,b)$ where $d\nmid a$ and $d\nmid b$ is in $S_k$ for exactly two values of $k\in\{0,1,\dots,d\}$.
\end{corollary}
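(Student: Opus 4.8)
The plan is to recognise each $S_k$ as one of the even kernels already produced by Lemma~\ref{lem:P2_ek_180_trail}, computed via the unfolding device from the proof of Lemma~\ref{lem:N1_closed_90_exist}. Two easy observations come first. \emph{Independence}: if $(i,j)\in S_k$, then whichever of the congruences $i+j\equiv\pm k$, $i-j\equiv\pm k\pmod{2d}$ holds, reducing it modulo $2$ (and using $i-j\equiv i+j\pmod 2$) gives $i+j\equiv k\pmod2$; hence $S_k$ lies in a single colour class of the bipartite graph $G$, so it is independent. \emph{Non-emptiness}: $(1,1)\in S_0$, $(2,1)\in S_1$ when $d\ge 3$, and $(1,k-1)\in S_k$ for $2\le k\le d$, and these are genuine vertices because $d\mid m+1$ forces $m\ge d-1$ and likewise $n\ge d-1$; thus $S_k\neq\emptyset$ unless $d=1$ or $(d,k)=(2,1)$, degenerate cases in which $S_k=\emptyset$ and the statement must be read as vacuous. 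From now on assume $S_k\neq\emptyset$ and fix $v=(a,b)\in S_k$, so $d\nmid a$ and $d\nmid b$. Exactly one of the alternatives ``$a+b\equiv\pm k\pmod{2d}$'' and ``$a-b\equiv\pm k\pmod{2d}$'' holds: if both held, adding the relevant congruences would give $2a\equiv0$ or $2b\equiv0\pmod{2d}$, i.e.\ $d\mid a$ or $d\mid b$. Say the first holds; the second is handled identically with the two diagonal directions interchanged.

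By Lemma~\ref{lem:P1_180_trail_exist} (more precisely its proof), the rays leaving $v$ in directions $(-1,+1)$ and $(+1,-1)$ together form a $180$--degree trail $T$, each ray either reaching a corner of the rectangle or returning to $v$ in the opposite direction. I would unfold $[0,m+1]\times[0,n+1]$ across all lines $x=\ell(m+1)$ and $y=\ell'(n+1)$ exactly as in the proof of Lemma~\ref{lem:N1_closed_90_exist}: then $T$ lifts to the straight line $x+y=a+b$, and $T$ is the image of that line under the folding map. A copy of a grid vertex $(i,j)$ sits at a point with $x\equiv\pm i\pmod{2(m+1)}$ and $y\equiv\pm j\pmod{2(n+1)}$, and since $\gcd(2(m+1),2(n+1))=2d$, such a copy lies on $x+y=a+b$ precisely when $\pm i\pm j\equiv a+b\pmod{2d}$, that is, when $i+j\equiv\pm(a+b)$ or $i-j\equiv\pm(a+b)\pmod{2d}$; tracking the parities of the numbers of reflections used shows the segment of $T$ through $(i,j)$ then has slope $-1$ in the first case and slope $+1$ in the second, so a grid vertex lies on at most one segment of each slope. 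Hence $(i,j)$ lies on two segments of $T$ iff both congruences hold, and subtracting them gives $2i\equiv0$ or $2j\equiv0\pmod{2d}$, i.e.\ $d\mid i$ or $d\mid j$; conversely a grid vertex on $T$ with $d\nmid i$, $d\nmid j$ lies on exactly one segment. Therefore the set of vertices of $T$ lying on exactly one straight segment equals $\{(i,j):d\nmid i,\ d\nmid j,\ i\pm j\equiv\pm k\pmod{2d}\}=S_k$ (using $a+b\equiv\pm k$), and by Lemma~\ref{lem:P2_ek_180_trail} this set is an even kernel containing $v$. This proves the first assertion.

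For the second assertion, let $(a,b)$ satisfy $d\nmid a$, $d\nmid b$; then $(a,b)\in S_k$ iff $k\equiv\pm(a+b)\pmod{2d}$ or $k\equiv\pm(a-b)\pmod{2d}$. For any integer $t$ there is exactly one $k\in\{0,1,\dots,d\}$ with $k\equiv\pm t\pmod{2d}$, since $\{0,1,\dots,d\}$ is a transversal of the $\pm$--classes modulo $2d$; so the two conditions single out values $k_+$ (from $a+b$) and $k_-$ (from $a-b$), and no other $k$ places $(a,b)$ in $S_k$. Finally $k_+\neq k_-$: if $k_+=k_-$ then $a+b\equiv\pm(a-b)\pmod{2d}$, giving $2a\equiv0$ or $2b\equiv0\pmod{2d}$, i.e.\ $d\mid a$ or $d\mid b$, against the hypothesis. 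Hence $(a,b)$ lies in exactly two of the sets $S_k$.

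The step I expect to require the most care is the claim that $T$ is the \emph{full} folded image of the line $x+y=a+b$. When $T$ is an open trail --- which happens exactly when $k\in\{0,d\}$, so that the line meets grid points $(\ell(m+1),\ell'(n+1))$ and the rays of $T$ stop at corners --- one must verify that the arc of $T$ between those two corners already sweeps out every diagonal $\{x-y=c\}$ and anti-diagonal $\{x+y=c\}$ with $c\equiv\pm k\pmod{2d}$; this is true because a slope-$\pm1$ billiard trajectory in a rectangle is eventually periodic and the two stopping corners are one full period apart on the unfolded line, but it should be spelled out. An alternative that avoids this point (and the need to exhibit a vertex $v\in S_k$) is to apply the purely local, vertex-by-vertex case analysis in the proof of Lemma~\ref{lem:P2_ek_180_trail} directly to the set $\widehat T_k$ consisting of all segments $\{x-y=c\}$ and $\{x+y=c\}$ inside $[0,m+1]\times[0,n+1]$ with $c\equiv\pm k\pmod{2d}$: since $d\mid m+1$ and $d\mid n+1$ force $2(m+1)\equiv2(n+1)\equiv0\pmod{2d}$, reflection in any side of the rectangle carries such a segment to another such segment, so $\widehat T_k$ has exactly the boundary behaviour of a $180$--degree trail, and the same argument shows its ``exactly one segment'' vertex set --- which, as above, is $S_k$ --- is an even kernel.
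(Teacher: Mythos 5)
Your argument is correct and follows the route the paper intends (the corollary is stated without proof as the explicit computation, via unfolding, of the even kernel produced by Lemma~\ref{lem:P2_ek_180_trail}): your identification of $S_k$ with the set of single-segment vertices of a 180--degree trail, and your count of the two values of $k$ containing a given $(a,b)$, are both sound. The one gap you flag yourself --- that the trail is the \emph{full} folded image of the line $x+y=a+b$ --- is real but closes exactly as you indicate (past a corner lift the unfolded line is carried onto itself by the point symmetry about that corner, so its folded image retraces the trail), and your observation that $S_k=\emptyset$ when $d=1$ or $(d,k)=(2,2,1)$-type degenerate cases occur (namely $d=1$, or $d=2$ with $k=1$) is a legitimate caveat to the corollary as literally stated, since the paper's definition of even kernel requires nonemptiness.
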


An example of the $S_k$ and its corresponding 180--degree trail for all $k=0,1,\dots,d$ when $G=G_{19,14}$ are shown in Figure~\ref{fig:6}.

\begin{figure}
    \centering
    \foreach \k in {0,1,2,3,4,5} {
        \begin{subfigure}{0.49\textwidth}
            \includegraphics[width=\textwidth]{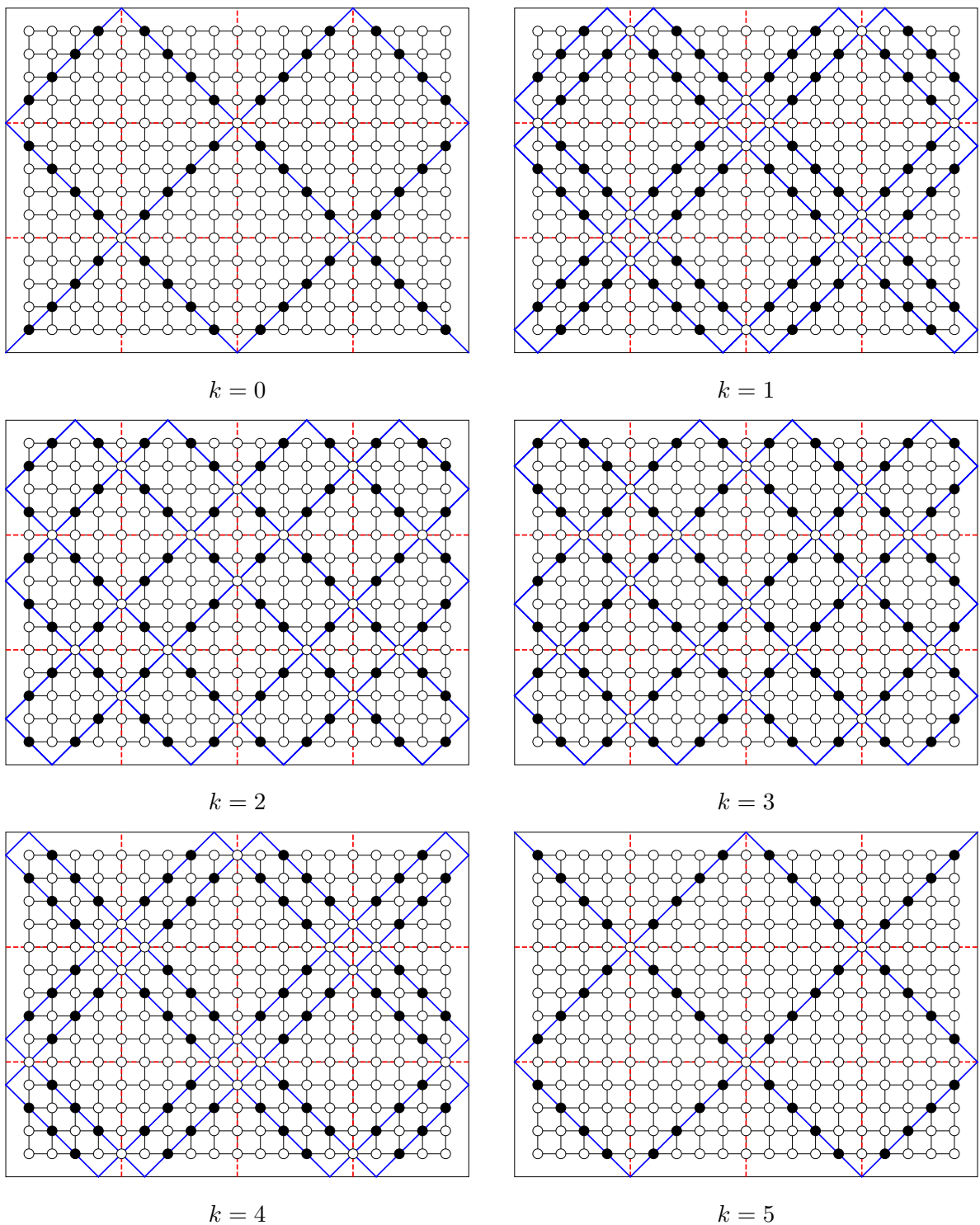}
            \caption*{$k=\k$}
        \end{subfigure}    
    }
    \caption{The even kernel $S_k$ of $G_{19,14}$ and its corresponding 180--degree trail for $k=0,1,\dots,5$.}
    \label{fig:6}
\end{figure}

\bibliographystyle{alpha} 
\bibliography{ref}
\end{document}